\theoremstyle{plain}
\newtheorem{thm}{Theorem}[section]
\newtheorem{lemma}[thm]{Lemma}
\newtheorem{prop}[thm]{Proposition}
\newtheorem{ques}[thm]{Question}
\newtheorem{cor}[thm]{Corollary}
\theoremstyle{remark}
\newtheorem{remark}[thm]{Remark}
\def\today{{\number\day\space
 \ifcase\month\or
  January\or February\or March\or April\or May\or June\or
  July\or August\or September\or October\or November\or December\fi
 \space\number\year}}
\newcommand\Cpx{{\mathbf C}}
\newcommand\HEu{{\EuScript H}}                   
\newcommand\Ic{{\mathcal{I}}}
\newcommand\KEu{{\EuScript K}}                   
\newcommand\ran{\operatorname{ran}}
\newcommand\Reals{{\mathbf R}}
\newcommand\Vc{{\mathcal{V}}}
\newcommand\Wc{{\mathcal{W}}}
\begin{document}

\title[Nilpotents as commutators]{Nilpotent elements of operator ideals as single commutators}

\author[Dykema]{Ken Dykema}
\address{Ken Dykema, Department of Mathematics, Texas A\&M University, College Station, TX, USA.}
\email{ken.dykema@math.tamu.edu}
\author[Krishnaswamy--Usha]{Amudhan Krishnaswamy--Usha}
\address{Amudhan Krishnaswamy--Usha, Department of Mathematics, Texas A\&M University, College Station, TX, USA.}
\email{amudhan@math.tamu.edu}
\subjclass[2010]{47B47 (47L20)}
\keywords{operator ideals, commutators, nilpotent operators}
\begin{abstract}
For an arbitrary operator ideal $\Ic$, every nilpotent element of $\Ic$ is a single commutator
of operators from $\Ic^{\,t}$, for an exponent $t$ that depends on the degree of nilpotency.
\end{abstract}

\date{June 11, 2017}

\maketitle

\section{Introduction}

By {\em operator ideal} we mean a proper, nonzero, two-sided ideal
of the algebra $B(\HEu)$ of bounded operators on a separable, infinite Hilbert space $\HEu$.
These ideals consist of compact operators.
For a compact operator, $A$ on $\HEu$, let $s(A)=(s_1(A),s_2(A),\ldots)$ be the
sequence of singular numbers of $A$.
This is the non-increasing sequence of nonzero eigenvalues of $|A|:=(A^*A)^{1/2}$, listed in order of multiplicity, 
with a tail of zeros in case $A$ has finite rank.
As Calkin showed~\cite{C41}, an operator ideal $\Ic$ is characterized by $s(\Ic)=\{s(A)\mid A\in\Ic\}$.
(See also, e.g.,~\cite{GK69} or~\cite{DFWW04} for expositions).
For a positive real number $t$ and an operator ideal $\Ic$, we let $\Ic^{\,t}$ denote the operator ideal generated
by $\{|A|^t\mid A\in\Ic\}$.

Questions about additive commutators $[B,C]:=BC-CB$ involving elements of operator ideals have been much studied.
One of the questions asked in~\cite{PT71}, by Pearcy and Topping, is whether every compact operator $A$ is a single commutator
$A=[B,C]$ of compact operators $B$ and $C$.
This question is still open.
Important results about single commutators in operator ideals were obtained in~\cite{PT71} and by Anderson~\cite{A77}.
Further results are found in Section~7 of~\cite{DFWW04}.
More recently, Belti\c t\u a, Patnaik and Weiss~\cite{BPW14} have made progress on the above mentioned question.

Our purpose in this note is to show that every nilpotent compact operator is a single commutator of compact operators.
In fact, we show (Theorem~\ref{thm:n}) that for a general operator ideal $\Ic$, every nilpotent element $A\in\Ic$ is a single commutator
$A=[B,C]$ of $B,C\in\Ic^{\,t}$, where the value of $t>0$ depends on the value of $n$ for which $A^n=0$.
Except in the case $n\le 4$, we don't know if we have found the optimal value of $t$.

\section{Preliminaries}

Let $\HEu$ be an infinite dimensional Hilbert space.
Nothing in this section is new, but we include proofs for convenience.

\begin{lemma}\label{lem:yx}
Suppose $x,y\in B(\HEu)$ and $t\in\Reals$, $t>0$.
\begin{enumerate}[(i)]
\item\label{it:yrx}
If $t(x^*x)\ge y^*y$, then there exists $r\in B(\HEu)$ such that $\|r\|\le\sqrt t$ and $y=rx$.
\item\label{it:yxr}
If $t(xx^*)\ge yy^*$, then there exists $r\in B(\HEu)$ such that $\|r\|\le\sqrt t$ and $y=xr$.
\end{enumerate}
\end{lemma}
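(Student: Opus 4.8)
The plan is to prove part~\ref{it:yrx} by hand, constructing $r$ directly, and then to obtain part~\ref{it:yxr} from it by passing to adjoints; this is the classical factorization trick of Douglas.

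For~\ref{it:yrx}, the first step is to rewrite the operator inequality $t(x^*x)\ge y^*y$ in vector form: for every $\xi\in\HEu$,
\[
\|y\xi\|^2 = \langle y^*y\,\xi,\xi\rangle \le t\,\langle x^*x\,\xi,\xi\rangle = t\,\|x\xi\|^2 .
\]
In particular $\ker x\subseteq\ker y$, so the prescription $r_0(x\xi):=y\xi$ defines, unambiguously, a linear map on the (generally non-closed) subspace $\ran(x)$, and the displayed inequality shows $\|r_0\eta\|\le\sqrt t\,\|\eta\|$ for all $\eta\in\ran(x)$. Hence $r_0$ extends by continuity to a bounded operator on $\overline{\ran(x)}$ of norm at most $\sqrt t$; letting $r$ equal this extension on $\overline{\ran(x)}$ and be $0$ on the orthogonal complement of $\ran(x)$ yields $r\in B(\HEu)$ with $\|r\|\le\sqrt t$ and $rx=y$, as required.

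For~\ref{it:yxr}, I would simply observe that the hypothesis $t(xx^*)\ge yy^*$ is exactly the hypothesis of~\ref{it:yrx} with $x$ and $y$ replaced by $x^*$ and $y^*$. Applying the case just proved gives $r'\in B(\HEu)$ with $\|r'\|\le\sqrt t$ and $y^*=r'x^*$; taking adjoints yields $y=x(r')^*$, so $r:=(r')^*$ does the job.

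The only points that need a moment's care are the well-definedness of $r_0$ on $\ran(x)$ and the existence of its continuous extension, and both are immediate from the pointwise inequality above; so I do not anticipate any genuine obstacle in this argument.
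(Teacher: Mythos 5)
Your proposal is correct and follows essentially the same argument as the paper: define $r_0$ on $\ran(x)$ by $x\xi\mapsto y\xi$ (well-defined and bounded by the pointwise inequality), extend continuously to $\overline{\ran(x)}$, cut off by the projection onto $\overline{\ran(x)}$, and deduce part~(ii) by taking adjoints. The only cosmetic difference is that the paper first reduces to $t=1$ by replacing $x$ with $\sqrt t\,x$, whereas you carry the constant $\sqrt t$ through directly.
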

\begin{proof}
The assertion~\eqref{it:yxr} follows from~\eqref{it:yrx} by taking adjoints.
If we prove the assertion~\eqref{it:yrx} when $t=1$, then the case of arbitrary $t$ follows, by replacing $x$ with $\sqrt tx$.
So it will prove~\eqref{it:yrx} in the case $t=1$.

Suppose $x^*x\ge y^*y$.
Given $\xi\in\HEu$, we have
\[
\|y\xi\|^2=\langle y^*y\xi,\xi\rangle\le\langle x^*x\xi,\xi\rangle=\|x\xi\|^2.
\]
Thus, we may define a contractive linear operator from $\ran(x)$ into $\HEu$ by
\[
x\xi\mapsto y\xi.
\]
This extends uniquely to a contractive linear operator, which we call $r_0$, from $\overline{\ran(x)}$ into $\HEu$.
We have $r_0x=y$.
Letting $p$ be the orthogonal projection from $\HEu$ onto $\overline{\ran(x)}$, we set $r=r_0p$.
Thus, $r\in B(\HEu)$ is a contraction and $rx=y$.
\end{proof}

For $n\ge1$, we make the natural identifications
\begin{equation}\label{eq:BHn}
B(\HEu^{\oplus n})=M_n(B(\HEu))=B(\HEu)\otimes M_n(\Cpx)
\end{equation}
and we let $(e_{i,j})_{1\le i,j,\le n}$ be the usual system of matrix units in $M_n(\Cpx)$.

Recall that $\HEu$ is assumed to be infinite dimensional
(and here we do not need to assume it is separable.) 
\begin{lemma}\label{lem:nilpUT}
Let $A\in B(\HEu)$ satisfy $A^n=0$.
Then there exists a unitary $U:\HEu\to\HEu^{\oplus n}$ such that $UAU^*$ is a strictly upper triangular element of $M_n(B(\HEu))$.
\end{lemma}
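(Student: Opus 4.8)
The plan is to reduce the statement to the following: find mutually orthogonal closed subspaces $\HEu_1,\dots,\HEu_n$ of $\HEu$, each of dimension $\dim\HEu$, with $\HEu=\HEu_1\oplus\dots\oplus\HEu_n$ and $A\HEu_j\subseteq\HEu_1\oplus\dots\oplus\HEu_{j-1}$ for every $j$. Given such a decomposition, pick unitaries $u_j\colon\HEu\to\HEu_j$ and let $U\colon\HEu\to\HEu^{\oplus n}$ send $\xi$ to $(u_1^*p_1\xi,\dots,u_n^*p_n\xi)$, where $p_j$ is the orthogonal projection onto $\HEu_j$; then $U$ is unitary and the $(i,j)$ entry of $UAU^*$ equals $u_i^*p_iAu_j$, which vanishes when $i\ge j$ since $A\HEu_j\subseteq\HEu_1\oplus\dots\oplus\HEu_{j-1}\perp\HEu_i$. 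The obvious candidate for the $\HEu_j$ is $\ker A^j\ominus\ker A^{j-1}$: these are orthogonal, their sum is $\HEu$ because $A^n=0$, and $A(\ker A^j)\subseteq\ker A^{j-1}$ makes $A$ strictly upper triangular with respect to them. The one genuine difficulty is that these kernel-gap subspaces need not each have dimension $\dim\HEu$, so the block sizes can be wrong; the rest of the argument is devoted to repairing this.

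First I would record an elementary dimension count. Since $A$ carries $\ker A^{k}$ into $\ker A^{k-1}$ with kernel $\ker A$, one has $\dim\ker A^{k}\le\dim\ker A^{k-1}+\dim\ker A$, hence $\dim\HEu=\dim\ker A^{n}\le n\dim\ker A$; as $n$ is finite and $\HEu$ is infinite-dimensional this forces $\dim\ker A=\dim\HEu$, and applying the same to $A^*$ gives $\dim\ker A^*=\dim\HEu$.

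Then I would argue by induction on $n$. For $n=1$ we have $A=0$ and take $\HEu_1=\HEu$. For $n\ge2$ I would split into two cases according to whether $\dim\overline{\ran A}<\dim\HEu$ or $\dim\overline{\ran A}=\dim\HEu$. In the first case, set $N=\overline{\ran A}+\overline{\ran A^*}$ (closed span); then $\dim N\le 2\dim\overline{\ran A}<\dim\HEu$, the space $N$ is $A$-invariant because $AN\subseteq\ran A\subseteq N$, and $N^{\perp}=\ker A^*\cap\ker A$ is annihilated by $A$. Decompose $N=N_1\oplus\dots\oplus N_n$ by the kernel filtration of the nilpotent operator $A|_N$, partition $N^{\perp}$ into closed subspaces $P_1,\dots,P_n$ each of dimension $\dim\HEu$ (possible since $\dim N^{\perp}=\dim\HEu$), and put $\HEu_j=N_j\oplus P_j$. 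In the second case, let $\mathcal R=\overline{\ran A}$; it is infinite-dimensional, $A$-invariant, and $(A|_{\mathcal R})^{n-1}=0$ because $A^{n-1}$ vanishes on $\ran A$. Apply the inductive hypothesis to $(\mathcal R,A|_{\mathcal R})$ to get $\mathcal R=\mathcal M_1\oplus\dots\oplus\mathcal M_{n-1}$ with each $\mathcal M_j$ of dimension $\dim\mathcal R=\dim\HEu$ and $A|_{\mathcal R}$ strictly upper triangular, and set $\HEu_j=\mathcal M_j$ for $j\le n-1$ and $\HEu_n=\HEu\ominus\mathcal R=\ker A^*$, which has dimension $\dim\HEu$ by the count above. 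In either case one checks directly that $\HEu=\HEu_1\oplus\dots\oplus\HEu_n$, each summand has dimension $\dim\HEu$, and $A\HEu_j\subseteq\HEu_1\oplus\dots\oplus\HEu_{j-1}$ — for instance, in the second case $A\HEu_n\subseteq\ran A\subseteq\mathcal R=\HEu_1\oplus\dots\oplus\HEu_{n-1}$, and for $j\le n-1$ the inclusion is inherited from the strict triangularity of $A|_{\mathcal R}$.

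The main obstacle throughout is purely the bookkeeping of block dimensions; everything else is formal. The device that overcomes it is the dichotomy on $\dim\overline{\ran A}$: it guarantees that in the inductive step the invariant subspace $\overline{\ran A}$ is itself infinite-dimensional (so the hypothesis applies to it), while in the complementary regime the ``essential part'' of $A$ is confined to the low-dimensional subspace $N$, which can be triangularized crudely and then padded out to full size with copies of $\HEu$ drawn from $\ker A\cap\ker A^*$.
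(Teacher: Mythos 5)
Your proof is correct, but it takes a genuinely different route from the paper's. The paper attacks the block-dimension problem head-on: it keeps the kernel filtration $\ker A\subseteq\ker A^2\subseteq\cdots$ and builds a nested chain $\Wc_1\subseteq\cdots\subseteq\Wc_n=\HEu$ with $\Wc_k\subseteq\ker A^k$, at each stage either taking $\Wc_k=\ker A^k$ (when the next kernel gap already has dimension $\dim\HEu$) or shrinking $\Wc_k$ to a proper subspace of $\ker A^k$ that still contains $A(\ker A^{k+1}\ominus\ker A^k)+\Wc_{k-1}$ while leaving both $\Wc_k\ominus\Wc_{k-1}$ and $\ker A^k\ominus\Wc_k$ of full dimension; the blocks are then $\HEu_k=\Wc_k\ominus\Wc_{k-1}$. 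You instead induct on $n$ with a single dichotomy on $\dim\overline{\ran A}$: when the range is small you confine $A$ to $N=\overline{\ran A+\ran A^*}$, triangularize there by the raw kernel filtration, and pad each block to full size with pieces of $N^{\perp}=\ker A\cap\ker A^*$; when the range is large you recurse on the $A$-invariant subspace $\overline{\ran A}$, where the nilpotency degree drops by one (since $A^{n-1}$ vanishes on $\ran A$ and hence on its closure), and append $\ker A^*$ as the last block. All the steps check out, including the cardinal arithmetic in the non-separable case; your opening count $\dim\ker A^k\le\dim\ker A^{k-1}+\dim\ker A$, forcing $\dim\ker A=\dim\HEu$, is if anything cleaner than the paper's doubling argument via $\ker A^{2^k}$. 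The paper's construction yields the extra feature that $\HEu_1\oplus\cdots\oplus\HEu_k\subseteq\ker A^k$, i.e., the triangularizing decomposition refines the kernel filtration, but nothing later in the paper uses this, so your structurally tidier induction is an equally valid basis for Theorem~\ref{thm:n}.
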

\begin{proof}
We will first show that $\dim\ker A=\dim\HEu$.
Consider $B= A|_{\ker A^2}$.
Note that $\ker A=\ker B$.
If $\dim\ker A^2=\dim\HEu$, then either $\dim\ker B=\dim\HEu$, or $\dim\ran B=\dim\HEu$.
But $\ran B\subset \ker B$, so $\dim\ker A^2=\dim\HEu$ implies $\dim\ker A=\dim\HEu$.
Since $A$ is nilpotent, we have $\dim\ker A^{2^k}=\dim\HEu$, for some $k$.
Thus, (arguing by induction on $k$), we must have $\dim\ker A=\dim\HEu$.

Let
\begin{align*}
\Vc_1&=\ker A, \\
\Vc_k&=\ker A^k\ominus\ker A^{k-1},\quad(2\le k\le n) \\
\end{align*}
We will construct subspaces
\[
\Wc_1\subseteq\Wc_2\subseteq\cdots\subseteq\Wc_n=\HEu
\]
with
\[
\Wc_k\subseteq\ker A^k
\]
such that, letting $\Wc_0=\{0\}$, we have, for every $1\le k\le n$,
\begin{equation}\label{eq:Wk}
\dim(\Wc_k\ominus\Wc_{k-1})=\dim\HEu
\end{equation}
and for every $k\le n-1$,
\begin{align}
\dim((\ker A^{k+1})\ominus\Wc_k)&=\dim\HEu, \label{eq:Wcond} \\
A(\Vc_{k+1})&\subseteq\Wc_k. \label{eq:AV}
\end{align}

Fixing $k=1$,
if $\dim\Vc_2=\dim\HEu$, then let $\Wc_1=\ker A$.
We know $\dim\ker A=\dim\HEu$, so~\eqref{eq:Wk} holds.
Moreover, $\ker A^2\ominus\Wc_1=\Vc_2$, so~\eqref{eq:Wcond} holds
and $A(\Vc_2)\subseteq A(\ker A^2)\subseteq\ker A$, so~\eqref{eq:AV} holds.
Otherwise, if $\dim\Vc_2<\dim\HEu$, then choose $\Wc_1$ so that 
\[
A(\Vc_2)\subseteq\Wc_1\subseteq\ker A
\]
and
\[
\dim\Wc_1=\dim\HEu=\dim(\ker A\ominus\Wc_1).
\]
This choice is possible because we know $\dim\ker A=\dim\HEu$ and by hypothesis $\dim A(\Vc_2)\le\dim\Vc_2<\dim\HEu$.
Then~\eqref{eq:Wk} and~\eqref{eq:AV} (for $k=1$) hold by
construction.
We have
\[
\dim\HEu\ge\dim((\ker A^2)\ominus\Wc_1)\ge\dim((\ker A)\ominus\Wc_1)=\dim\HEu,
\]
so~\eqref{eq:Wcond} holds.

Now suppose $2\le k\le n-1$ and $\Wc_1,\ldots,\Wc_{k-1}$ have been constructed with the required properties.
If $\dim\Vc_{k+1}=\dim\HEu$, then let $\Wc_k=\ker A^k$.
Then~\eqref{eq:Wk} for $k$ is just~\eqref{eq:Wcond} for $k-1$ while~\eqref{eq:Wcond} for $k$ is just the hypothesis $\dim(\Vc_{k+1})=\dim\HEu$.
Moreover, $A(\Vc_{k+1})\subseteq A(\ker A^{k+1})\subseteq\ker A^k$, so~\eqref{eq:AV} holds for this $k$ as well.

Othewrwise, if $\dim\Vc_{k+1}<\dim\HEu$, then choose $\Wc_k$ so that
\[
A(\Vc_{k+1})+\Wc_{k-1}\subseteq\Wc_k\subseteq\ker A^k
\]
and
\[
\dim(\Wc_k\ominus\Wc_{k-1})=\dim\HEu=\dim((\ker A^k)\ominus\Wc_k).
\]
This is possible because, by hypothesis (namely, \eqref{eq:Wcond} for $k-1$),
\[
\dim((\ker A^k\ominus\Wc_{k-1})=\dim\HEu
\]
and $\dim(A(\Vc_{k+1}))\le\dim\Vc_{k+1}<\dim\HEu$.
Then~\eqref{eq:Wk} and~\eqref{eq:AV} hold by construction, while for~\eqref{eq:Wcond}, we use
\[
\dim\HEu\ge\dim((\ker A^{k+1})\ominus\Wc_k)\ge\dim((\ker A^k)\ominus\Wc_k)=\dim\HEu.
\]

Finally, set $\Wc_n=\HEu=\ker A^n$.
Then~\eqref{eq:Wk} for $k=n$ follows from~\eqref{eq:Wcond} for $k=n-1$.

Using~\eqref{eq:AV}, we get
\[ 
A(\Wc_k) \subseteq A(\ker A^k) = A(\Vc_1)+\cdots+A(\Vc_k) \subseteq \Wc_{k-1}.
\]
Let $\HEu_1 = \Wc_1$, and $\HEu_k = \Wc_{k}\ominus \Wc_{k-1}$, $2\le k\leq n$.
Then $\dim\HEu_k=\dim\HEu$ for all $k$ and
\begin{align*}
A(\HEu_1)&=\{0\} \\
A(\HEu_k)&\subseteq\bigoplus_{j=1}^{k-1}\HEu_j,\quad(2\le k\le n).
\end{align*}
Choosing unitaries $U_k:\HEu_k\to\HEu$
yields a unitary $U=\oplus_{k=1}^nU_j:\HEu\to\HEu^{\oplus n}$ so that $UAU^*$ is a strictly upper triangular matrix. 
\end{proof}

\begin{remark}\label{rem:ABC}
We work in $B(\HEu)\otimes M_n(\Cpx)$ and suppose
\[
A=\sum_{1\le i<j\le n}a_{i,j}\otimes e_{i,j}
\]
for $a_{i,j}\in B(\HEu)$.
If
\[
B=\sum_{i=1}^{n-1}b_i\otimes e_{i,i+1},\qquad C=\sum_{2\le i\le j\le n}c_{i,j}\otimes e_{i,j}
\]
with $b_i,c_{i,j}\in B(\HEu)$,
then the condition $A=BC-CB$,
is equivalent to 
\begin{alignat*}{2}
a_{1,j}&=b_1\,c_{2,j}\quad&&(2\le j\le n) \\
a_{i,j}&=b_i\,c_{i+1,j}-c_{i,j-1}b_{j-1}\quad&&(2\le i<j\le n)
\end{alignat*}
or, equivalently,
\begin{alignat}{2}
b_1\,c_{2,j}&=a_{1,j}\quad&&(2\le j\le n) \label{eq:bc2} \\
b_i\,c_{i+1,j}&=a_{i,j}+c_{i,j-1}b_{j-1}\quad&&(2\le i<j\le n). \label{eq:bc} 
\end{alignat}
\end{remark}

\section{Nilpotents in operator ideals}

Let $\Ic$ be an operator ideal.
It is well known and easy to see that, under any identification of $B(\HEu)$ with $M_n(B(\HEu))$ as in~\eqref{eq:BHn}, the ideal $\Ic$ is identified with $M_n(\Ic)$.

We first prove the following easy result, whose proof is similar to that of Proposition~3.2 of~\cite{DS12}.
\begin{prop}\label{prop:nilp}
Let $\Ic$ be an operator ideal and suppose $A\in\Ic$ is nilpotent.
Then there exist $B\in B(\HEu)$ and $C\in\Ic$ such that $A=BC-CB$.
\end{prop}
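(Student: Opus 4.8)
The plan is to pass to the strictly upper-triangular model given by Lemma~\ref{lem:nilpUT} and then to solve the commutator equations of Remark~\ref{rem:ABC} in the crudest possible way: take $B$ to be a shift and absorb everything into $C$.

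First, suppose $A^n=0$ and apply Lemma~\ref{lem:nilpUT} to obtain a unitary $U\colon\HEu\to\HEu^{\oplus n}$ with $UAU^*=\sum_{1\le i<j\le n}a_{i,j}\otimes e_{i,j}$ strictly upper triangular in $M_n(B(\HEu))$. Since singular numbers are unitarily invariant and, under the identification~\eqref{eq:BHn}, the ideal $\Ic\subset B(\HEu)$ is identified with $M_n(\Ic)$ (as recalled at the start of this section), we get $a_{i,j}\in\Ic$ for all $i<j$. Hence it suffices to find $B'\in M_n(B(\HEu))$ and $C'\in M_n(\Ic)$ with $B'C'-C'B'=UAU^*$: conjugating back, $B:=U^*B'U\in B(\HEu)$ and $C:=U^*C'U\in\Ic$ then satisfy $BC-CB=A$.

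Next, apply Remark~\ref{rem:ABC} with $b_i=I$ for $1\le i\le n-1$, that is, with $B'=\sum_{i=1}^{n-1}I\otimes e_{i,i+1}$, a bounded operator. With this choice, equations~\eqref{eq:bc2} and~\eqref{eq:bc} become $c_{2,j}=a_{1,j}$ $(2\le j\le n)$ and $c_{i+1,j}=a_{i,j}+c_{i,j-1}$ $(2\le i<j\le n)$, which recursively and uniquely determine $c_{i,j}$ for all $2\le i\le j\le n$; moreover each $c_{i,j}$ is a finite sum of the entries $a_{k,\ell}$, hence lies in $\Ic$. Setting $C'=\sum_{2\le i\le j\le n}c_{i,j}\otimes e_{i,j}\in M_n(\Ic)$, Remark~\ref{rem:ABC} yields $B'C'-C'B'=UAU^*$, which completes the construction.

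I do not expect a genuine obstacle here beyond bookkeeping: one only has to track the identification $B(\HEu^{\oplus n})\cong B(\HEu)$ and verify that the recursion is consistent with the index ranges of Remark~\ref{rem:ABC} (it is, since $j>i$ forces $j-1\ge i$, so $c_{i,j-1}$ is already defined when $c_{i+1,j}$ is computed). The argument is this short precisely because $B$ is only required to be bounded, in contrast with Theorem~\ref{thm:n}, where one must also place $B$ in a small ideal $\Ic^{\,t}$ and therefore cannot simply dump everything into $C$.
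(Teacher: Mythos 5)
Your proof is correct and is essentially the paper's own argument: reduce to the strictly upper triangular form via Lemma~\ref{lem:nilpUT}, set all $b_i=1$, and solve~\eqref{eq:bc2} and~\eqref{eq:bc} by the recursion $c_{2,j}=a_{1,j}$, $c_{i+1,j}=a_{i,j}+c_{i,j-1}$, with each $c_{i,j}$ a finite sum of entries of $A$ and hence in $\Ic$. The only difference is that you spell out the unitary conjugation and the identification of $\Ic$ with $M_n(\Ic)$ more explicitly than the paper does.
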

\begin{proof}
Let $n\ge2$ be such that $A^n=0$.
By Lemma~\ref{lem:nilpUT}, we may work in $B(\HEu)\otimes M_n(\Cpx)$ and suppose
\[
A=\sum_{1\le i<j\le n}a_{i,j}\otimes e_{i,j}
\]
for $a_{i,j}\in\Ic$.
We need only find elements $b_i\in B(\HEu)$ and $c_{i,j}\in\Ic$, as in Remark~\ref{rem:ABC}, so that~\eqref{eq:bc2} and~\eqref{eq:bc} hold.
This is easily done by setting $b_i=1$ for all $i$ and recursively assigning
\begin{alignat*}{2}
c_{2,j}&=a_{1,j}\quad&&(2\le j\le n)\\
c_{i+1,j}&=a_{i,j}+c_{i,j-1},\quad&&(2\le i<j\le n).
\end{alignat*}
\end{proof}

\begin{thm}\label{thm:n}
Let $\Ic$ be an operator ideal and suppose
$A\in\Ic$ satisfies $A^n=0$, for some integer $n\ge4$.
Then there exist $B,C\in\Ic^{1/2^{n-3}}$ such that $A=BC-CB$.
\end{thm}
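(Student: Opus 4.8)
The plan is to reduce at once, via Lemma~\ref{lem:nilpUT}, to the case where $A$ is a strictly upper triangular element of $M_n(B(\HEu))$ with entries $a_{i,j}\in\Ic$, and then to look for $B,C$ of the form appearing in Remark~\ref{rem:ABC}, so that everything comes down to solving the system \eqref{eq:bc2}--\eqref{eq:bc} with all the $b_i$ and $c_{i,j}$ in $\Ic^{1/2^{n-3}}$. In Proposition~\ref{prop:nilp} one takes $b_i=1$; the whole difficulty is to make the $b_i$ small compact operators without making the $c_{i,j}$ too large. I would do this by fixing a single positive $h\in\Ic$, rescaled so that $\|h\|\le1$ and chosen large enough that $a_{i,j}a_{i,j}^*\le h^2$ and $a_{i,j}^*a_{i,j}\le h^2$ for all $i<j$ (e.g.\ $h=(\sum_{i<j}(a_{i,j}^*a_{i,j}+a_{i,j}a_{i,j}^*))^{1/2}$, rescaled), and setting each $b_i=h^{\alpha_i}$ for exponents $\alpha_i>0$ to be fixed. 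I would run the argument by induction on $n$, reducing the degree-$n$ case over an ideal $\mathcal J$ to the degree-$(n-1)$ case at the cost of one square root of the ideal, so that the exponent $1/2^{n-3}$ builds up from the base case $n=4$.

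Granting the $b_i=h^{\alpha_i}$, one solves \eqref{eq:bc2}--\eqref{eq:bc} row by row: once the earlier rows are done, the right side $R_{i,j}:=a_{i,j}+c_{i,j-1}b_{j-1}$ of the equation $b_ic_{i+1,j}=R_{i,j}$ is a known operator, and one solves for $c_{i+1,j}$ by applying Lemma~\ref{lem:yx}\eqref{it:yxr} with $x=h^{\alpha_i+\mu_i}$ and $y=R_{i,j}$, obtaining a bounded $r$ with $R_{i,j}=h^{\alpha_i+\mu_i}r$, and then putting $c_{i+1,j}:=h^{\mu_i}r$ (here $\mu_i>0$ is the exponent with which row $i$ contributes to $C$; row~$1$, having no error term, works the same way with $R_{1,j}=a_{1,j}$). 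This is legitimate as soon as $R_{i,j}R_{i,j}^*\le(\mathrm{const})\,h^{2(\alpha_i+\mu_i)}$, which is the estimate at the heart of the matter: the term $a_{i,j}a_{i,j}^*$ is $\le h^2\le h^{2(\alpha_i+\mu_i)}$ whenever $\alpha_i+\mu_i\le1$, the error term $c_{i,j-1}b_{j-1}(c_{i,j-1}b_{j-1})^*=h^{\mu_{i-1}}s\,h^{2\alpha_{j-1}}s^{*}h^{\mu_{i-1}}$ is $\le(\mathrm{const})\,h^{2\mu_{i-1}}$ because $h^{2\alpha_{j-1}}\le1$, and the cross terms are controlled by the same two powers via $XY^*+YX^*\le XX^*+YY^*$; so the estimate goes through provided also $\alpha_i+\mu_i\le\mu_{i-1}$. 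Because the $b_i$ are committed to in advance as powers of $h$, the occurrence of $b_i$ in its own row (through the term $c_{i,i}b_i$) creates no circularity, and each $c_{i+1,j}=h^{\mu_i}\cdot(\text{bounded})\in\Ic^{\mu_i}$, each $b_i\in\Ic^{\alpha_i}$, with $\Ic^{\mu}\subseteq\Ic^{1/2^{n-3}}$ as soon as $\mu\ge1/2^{n-3}$.

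It remains to choose the exponents and to run the induction. For the base case $n=4$ one checks that taking every $\alpha_i$ and every $\mu_i$ equal to $\tfrac12$ satisfies $\alpha_1+\mu_1\le1$ and $\alpha_i+\mu_i\le\mu_{i-1}$, so that $B=\sum_i b_i\otimes e_{i,i+1}$ and $C=\sum_{i\le j}c_{i,j}\otimes e_{i,j}$ lie in $\Ic^{1/2}$ --- the case where $t=1/2$ is known to be optimal. For the step $n\to n+1$ I would split off the last column, writing $A$ in $2\times2$ block form with a strictly upper triangular $A''\in M_n(\Ic)$ in the upper-left $n\times n$ block, the last column $\sigma\in M_{n,1}(\Ic)$, and zero bottom row; seeking $B,C$ in matching block form with a Remark-type pair $B'',C''$ in the upper-left corner, the equation $A=BC-CB$ becomes $A''=B''C''-C''B''$ together with a further system $b''_i\gamma_{i+1}=\sigma_i+c''_{i,n}b_n$ ($1\le i\le n-1$) and $b_n\gamma_0=\sigma_n+c''_{n,n}b_n$ of exactly the same shape, now involving the divisors $b''_i$ produced for $A''$. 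Applying the inductive hypothesis to $A''$ (with its $h$ taken also to dominate $\sigma$) and solving the extra system by the same row-by-row procedure, the fact that the $\sigma$-equations must be divided by the subproblem's divisors $b''_i$ forces one more halving of the exponents, so that the new $B,C$ land in $\Ic^{1/2^{(n+1)-3}}$. Transporting back through the unitary of Lemma~\ref{lem:nilpUT} finishes the proof.

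The hard part, and the reason the exponent degrades with $n$, is that Lemma~\ref{lem:yx} only ever manufactures a \emph{bounded} quotient: to push $c_{i+1,j}$ into the target ideal one must pre-load the divisor $b_i$ with a positive power of $h$, and this pits ``$b_i$ large enough to dominate the right side of its row'' --- a right side that inherits, through the term $c_{i,j-1}b_{j-1}$, the smallness of the previous row's $c$'s --- against ``$b_i$ small enough to lie in $\Ic^{1/2^{n-3}}$.'' Getting the domination inequalities for the error terms exactly right and then solving the resulting inequalities among the exponents $\alpha_i,\mu_i$ (and tracking how the subproblem's exponents feed the $\sigma$-equations in the inductive step) is where the real work lies; the natural attempt at improving $t$ would be to carry a right-hand power of $h$ on each $c_{i+1,j}$ as well as a left-hand one, using that the $a_{i,j}$ and the error terms factor through $h$ on both sides, but as the excerpt notes this cannot be pushed below $1/2$ when $n=4$.
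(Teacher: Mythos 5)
Your reduction to the strictly upper triangular form of Lemma~\ref{lem:nilpUT}, the ansatz of Remark~\ref{rem:ABC}, and the plan of solving \eqref{eq:bc2}--\eqref{eq:bc} row by row by left-dividing via Lemma~\ref{lem:yx} all match the paper. But there is a genuine gap, and it sits exactly where you locate ``the heart of the matter'': the exponent bookkeeping for a single fixed $h$. Your own constraints are $\alpha_1+\mu_1\le1$ and $\alpha_i+\mu_i\le\mu_{i-1}$ for $i\ge2$, and the choice $\alpha_i=\mu_i=\tfrac12$ claimed for the base case $n=4$ violates the second one already at $i=2$: $\alpha_2+\mu_2=1\not\le\mu_1=\tfrac12$. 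Concretely, in row $2$ the error term $c_{2,j-1}b_1=h^{1/2}r\,h^{1/2}$ satisfies only $(h^{1/2}rh^{1/2})(h^{1/2}rh^{1/2})^*\le\|r\|^2\|h\|\,h$, and for an infinite-rank positive compact $h$ one does not have $h\le C h^{2}$, so the hypothesis of Lemma~\ref{lem:yx} is not met. The obstruction is structural, not a matter of tuning: chaining your constraints gives $\mu_{n-1}\le1-\sum_{i=1}^{n-1}\alpha_i$, so if every $\alpha_i$ and $\mu_i$ is to be at least $t$ you need $t\le1/n$. Thus the single-$h$, additive-exponent scheme caps out at $\Ic^{1/n}$; correctly executed (e.g.\ $\alpha_i=1/n$, $\mu_i=1-i/n$) it does prove $B,C\in\Ic^{1/n}$, which implies the theorem for $n\ge6$ (there $1/n\ge1/2^{n-3}$) but yields only $\Ic^{1/4}$ and $\Ic^{1/5}$ for $n=4,5$ instead of the asserted $\Ic^{1/2}$ and $\Ic^{1/4}$. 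Since your induction is anchored at $n=4$ (and the inductive step is only sketched), the argument as written fails.

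What the paper does differently, and what you would need to import, is to abandon commuting powers of one $h$ and choose the $b_i$ adaptively: $b_i:=\bigl(b_{i-1}^2+\sum_j|a_{i,j}^*|^2\bigr)^{1/4}$, so that $b_i^4\ge b_{i-1}^2$ and Lemma~\ref{lem:yx} yields $b_{i-1}=b_i^2x_i$ with $x_i$ bounded. Then the error term $c_{i,j-1}b_{j-1}=b_{i-1}y\,b_{j-1}=b_i^2x_iy\,b_{j-1}$ is divisible by $b_i^2$, and the quotient $c_{i+1,j}=b_i(r_{i,j}+x_iy\,b_{j-1})$ again has $b_i$ itself as a left factor. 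In your notation this forces $\mu_i=\alpha_i$ with $\alpha_i=\alpha_{i-1}/2$ (multiplicative halving) rather than $\mu_i\le\mu_{i-1}-\alpha_i$ (additive loss), which is how the paper reaches $1/2^{n-3}$ where your scheme reaches $1/n$. Two further halvings are then saved by an endgame you would also need: $b_{n-2}$ is required to dominate $b_{n-3}$ only to the first power, the error terms in the last two columns are absorbed by placing $b_{n-2}$ (resp.\ $b_{n-1}$) on the \emph{right} of the new $c$, and $b_{n-1}$ is defined only after $c_{n-1,n-1}$, so that it can be chosen to dominate $|c_{n-1,n-1}^*|$ directly. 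Your closing remark about carrying a right-hand power of $h$ on each $c_{i+1,j}$ gestures at exactly this, but it is the part that is not carried out, and without it the claimed exponent is not reached.
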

\begin{proof}
By Lemma~\ref{lem:nilpUT}, we may work in $B(\HEu)\otimes M_n(\Cpx)$ and suppose
\[
A=\sum_{1\le i<j\le n}a_{i,j}\otimes e_{i,j}
\]
for $a_{i,j}\in\Ic$.
We will find elements $b_i$ and $c_{i,j}$ of $\Ic^{1/2^{n-3}}$, as in Remark~\ref{rem:ABC}, so that~\eqref{eq:bc2} and~\eqref{eq:bc} hold.

\vskip1ex
\noindent
{\em Step 1: assign values to $b_1,\ldots,b_{n-2}$.}
\vskip1ex
Let
\begin{align*}
b_1&=\left(\sum_{j=2}^n|a_{1,j}^*|^2\right)^{1/4}\in\Ic^{1/2} \\
b_i&=\left(b_{i-1}^2+\sum_{j=i+1}^n|a_{i,j}^*|^2\right)^{1/4}\in\Ic^{1/2^i},\qquad(2\le i\le n-3) \\
b_{n-2}&=\left(b_{n-3}^4+\sum_{j=i+1}^n|a_{i,j}^*|^2\right)^{1/4}\in\Ic^{1/2^{n-3}}.
\end{align*}
Since for every $1\le i\le n-2$ and every $i<j\le n$, we have $b_i^4\ge|a_{i,j}^*|^2$, by Lemma~\ref{lem:yx} there exists $r_{i,j}\in B(\HEu)$
such that
\[
b_i^2r_{i,j}=a_{i,j}\qquad(1\le i\le n-2,\,i<j\le n).
\]
Moreover, for every $2\le i\le n-3$, since $b_i^4\ge b_{i-1}^2$,
by the same lemma there exists $x_i\in B(\HEu)$ such that
\[
b_i^2x_i=b_{i-1}\qquad(2\le i\le n-3).
\]
Furthermore, since $b_{n-2}^4\ge b_{n-3}^4$ and the square root function is operator monotone, we have $b_{n-2}^2\ge b_{n-3}^2$.
Thus, by Lemma~\ref{lem:yx} there exists $z\in B(\HEu)$ so that 
\[
b_{n-2}\,z=b_{n-3}.
\]

\vskip1ex
\noindent
{\em Step 2: assign values to $y_{2,j}$ and $c_{2,j}$ for $2\le j\le n$ and verify~\eqref{eq:bc2}.} 
\vskip1ex
Let
\[
y_{2,j}=r_{1,j},\qquad c_{2,j}=b_1y_{2,j},\qquad(2\le j\le n).
\]
Thus, $c_{2,j}\in\Ic^{1/2}$.
Then we have
\[
b_1c_{2,j}=b_1^2r_{1,j}=a_{1,j},\qquad(2\le j\le n),
\]
namely,~\eqref{eq:bc2} holds.

\vskip1ex
\noindent
{\em Step 3: assign values to $y_{p,j}$ and $c_{p,j}$ for $3\le p\le n-2$ and $p\le j\le n-1$ and verify the equality in~\eqref{eq:bc} for
$2\le i\le n-3$ and $i<j\le n-1$.} 
\vskip1ex
We let $p$ increase from $3$ to $n-2$ and for each such $p$ we define (recursively in $p$) for every $j\in\{p,p+1,\ldots,n-1\}$,
\[
y_{p,j}=r_{p-1,j}+x_{p-1}y_{p-1,j-1}b_{j-1},\qquad c_{p,j}=b_{p-1}y_{p,j}.
\]
Thus, $c_{p,j}\in\Ic^{1/2^{p-1}}$ and
we have 
\[
\begin{aligned}[b]
b_ic_{i+1,j}&=b_i^2r_{i,j}+b_i^2x_iy_{i,j-1}b_{j-1} \\
&=a_{i,j}+b_{i-1}y_{i,j-1}b_{j-1} \\
&=a_{i,j}+c_{i,j-1}b_{j-1},
\end{aligned}
\qquad
(2\le i\le n-3,\,i<j\le n-1)
\]
and the equality in~\eqref{eq:bc} holds for these values of $i$ and $j$.

\vskip1ex
\noindent
{\em Step 4: assign a value to $c_{n-1,n-1}$ and verify the equality in~\eqref{eq:bc} for $i=n-2$ and $j=n-1$.}
\vskip1ex
Let
\[
c_{n-1,n-1}=b_{n-2}r_{n-2,n-1}+zy_{n-2,n-2}b_{n-2}.
\]
Then $c_{n-1,n-1}\in\Ic^{1/2^{n-3}}$ and
\begin{align*}
b_{n-2}c_{n-1,n-1}&=b_{n-2}^2r_{n-2,n-1}+b_{n-2}zy_{n-2,n-2}b_{n-2} \\
&=a_{n-2,n-1}+b_{n-3}y_{n-2,n-2}b_{n-2} \\
&=a_{n-2,n-1}+c_{n-2,n-2}b_{n-2}.
\end{align*}
Thus, the equality in~\eqref{eq:bc} holds for $i=n-2$ and $j=n-1$.

\vskip1ex
\noindent
{\em Step 5: assign a value to $b_{n-1}$.}
\vskip1ex
Let
\[
b_{n-1}=\left(|a_{n-1,n}^*|^2+|c_{n-1,n-1}^*|^4\right)^{1/4}.
\]
Then $b_{n-1}\in\Ic^{1/2^{n-3}}$.
Since $b_{n-1}^4\ge|a_{n-1,n}^*|^2$, by Lemma~\ref{lem:yx} there is $r_{n-1,n}\in B(\HEu)$ so that
\[
b_{n-1}^2r_{n-1,n}=a_{n-1,n}.
\]
Since $b_{n-1}^4\ge|c_{n-1,n-1}^*|^4$, we have $b_{n-1}^2\ge|c_{n-1,n-1}^*|^2$ and, from Lemma~\ref{lem:yx}, we have $s\in B(\HEu)$ so that
\[
b_{n-1}s=c_{n-1,n-1}.
\]

\vskip1ex
\noindent
{\em Step 6: assign values to $c_{p,n}$ for all $3\le p\le n-2$ and verify the equality in~\eqref{eq:bc} for all $2\le i\le n-3$ and $j=n$.}
\vskip1ex
Let
\[
c_{p,n}=b_{p-1}r_{p-1,n}+b_{p-1}x_{p-1}y_{p-1,n-1}b_{n-1}.
\]
Then $c_{p,n}\in\Ic^{1/2^{p-1}}$ and 
\[
\begin{aligned}[b]
b_ic_{i+1,n}&=b_i^2r_{i,n}+b_i^2x_iy_{i,n-1}b_{n-1} \\
&=a_{i,n}+b_{i-1}y_{i,n-1}b_{n-1} \\
&=a_{i,n}+c_{i,n-1}b_{n-1},
\end{aligned}
\qquad(2\le i\le n-3),
\]
namely, the equality in~\eqref{eq:bc} holds for these values of $i$ and for $j=n$.

\vskip1ex
\noindent
{\em Step 7: assign a value to $c_{n-1,n}$ and verify the equality in~\eqref{eq:bc} for $i=n-2$ and $j=n$.}
\vskip1ex
Let
\[
c_{n-1,n}=b_{n-2}r_{n-2,n}+zy_{n-2,n-1}b_{n-1}.
\]
Then $c_{n-1,n}\in\Ic^{1/2^{n-3}}$ and
\begin{align*}
b_{n-2}c_{n-1,n}&=b_{n-2}^2r_{n-2,n}+b_{n-2}zy_{n-2,n-1}b_{n-1} \\
&=a_{n-2,n}+b_{n-3}y_{n-2,n-1}b_{n-1} \\
&=a_{n-2,n}+c_{n-2,n-1}b_{n-1},
\end{align*}
namely, the equality in~\eqref{eq:bc} holds for $i=n-2$ and for $j=n$.

\vskip1ex
\noindent
{\em Step 8: assign a value to $c_{n,n}$ and verify the equality in~\eqref{eq:bc} for $i=n-1$ and $j=n$.}
\vskip1ex
Let
\[
c_{n,n}=b_{n-1}r_{n-1,n}+sb_{n-1}.
\]
Then $c_{n,n}\in\Ic^{1/2^{n-3}}$ and
\[
b_{n-1}c_{n,n}=b_{n-1}^2r_{n-1,n}+b_{n-1}sb_{n-1}=a_{n-1,n}+c_{n-1,n-1}b_{n-1},
\]
as required.
\end{proof}

\begin{cor}\label{cor:ABC}
Let $\Ic$ by any operator ideal such that $\Ic^{\,t}\subseteq\Ic$ for every $t>0$.
Then for every nilpotent element $A$ of $\Ic$, there exist $B,C\in\Ic$ such that $A=BC-CB$.
\end{cor}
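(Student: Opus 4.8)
The plan is to deduce the corollary directly from Theorem~\ref{thm:n}. Given a nilpotent $A\in\Ic$, I would first record that $A^m=0$ for some integer $m\ge1$ and hence $A^n=0$ for every $n\ge m$. This lets me fix an integer $n\ge4$ with $A^n=0$ (for instance $n=\max(m,4)$, which simultaneously takes care of the trivial case $A=0$). This harmless strengthening of the degree of nilpotency is the only point that needs a word of care, since Theorem~\ref{thm:n} is stated only for exponents $n\ge4$.

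Next I would apply Theorem~\ref{thm:n} to $A$ with this value of $n$, obtaining $B,C\in\Ic^{\,1/2^{n-3}}$ with $A=BC-CB$. Since $1/2^{n-3}>0$, the standing hypothesis $\Ic^{\,t}\subseteq\Ic$ for all $t>0$ yields $\Ic^{\,1/2^{n-3}}\subseteq\Ic$, so that $B,C\in\Ic$, which is exactly the conclusion wanted. I do not expect any genuine obstacle at this stage: all of the work is already contained in Theorem~\ref{thm:n}, and the corollary is just a matter of combining that theorem with the closure hypothesis on $\Ic$.

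Finally, I would note that the ideal $\KEu$ of all compact operators satisfies $\KEu^{\,t}=\KEu$ for every $t>0$ (if $A$ is compact then so is $|A|^t$, and conversely $A=U|A|$ lies in the ideal generated by $|B|^t$ where $|B|=|A|^{1/t}$), so the corollary applies to $\Ic=\KEu$. This recovers the assertion promised in the introduction that every nilpotent compact operator is a single commutator of compact operators.
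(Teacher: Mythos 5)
Your deduction is correct and is exactly the intended argument: the paper states the corollary as an immediate consequence of Theorem~\ref{thm:n}, and your observation that one may replace the nilpotency degree $m$ by $n=\max(m,4)$ before invoking the theorem is the only (minor) point that needed to be made explicit. The closing remark about $\KEu^{\,t}=\KEu$ matches the paper's example (a) following the corollary.
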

Examples of operator ideals $\Ic$ satisfying the conditions of Corollary~\ref{cor:ABC} include
\begin{enumerate}[(a)]
\item the ideal  $\KEu$ of all compact operators;
\item the ideal of all operators $A$ whose singular numbers have polynomial decay: $s_n(A)=O(n^{-t})$ for some $t>0$;
note that this ideal is equal to the union of all Schatten $p$-class ideals, $p\ge1$;
\item the ideal of all operators $A$ whose singular numbers have exponential decay: $s_n(A)=O(r^n)$ for some $0<r<1$;
\item the ideal of all finite rank operators.
\end{enumerate}

\begin{ques}
Is $1/2^{n-3}$ the optimal exponent of $\Ic$ in Theorem~\ref{thm:n}?
Clearly, the answer is yes when $n=4$.
But as far as we know, it is possible that the best exponent is $1/2$ for arbitrary $n$.
\end{ques}

\end{document}